\renewcommand{\subjclass}[1]{\thanks{\emph{2010 Mathematics Subject Classification:}~#1}}
\renewcommand{\keywords}[1]{\thanks{\emph{Keywords and Phrases:}~#1}}
\renewcommand{\date}{\thanks{\today}}
\newtheorem{theorem}{Theorem}[section]
\newtheorem{lemma}{Lemma}[section]
\newtheorem{proposition}[lemma]{Proposition}
\numberwithin{equation}{section}
\def\teto#1{\setbox\z@\hbox{${#1\vphantom k}$}\hbox{%
 \hbox{\lower2\ex@\hbox{\lower\dp\z@\hbox{\vbox{\hrule
 \hbox{\vrule\hskip2\ex@\vbox{\vskip2\ex@\box\z@\vskip1\ex@}%
 \hskip2\ex@\vrule}}}}}}}
\newcommand{\Nn}{\mathbb{N}}
\newcommand{\D}{\mathcal{D}}
\renewcommand{\eqref}[1]{(\ref{#1})}
\title[Finiteness results for Diophantine triples]{Finiteness results for Diophantine triples with repdigit values}
\author[A. B\'erczes]{Attila B\'erczes}
\author[F. Luca]{Florian Luca}
\author[I. Pink]{Istv\'an Pink}
\author[V. Ziegler]{Volker Ziegler}
\subjclass{11D61}
\keywords{Diophantine sets, repdigit numbers}
\address{A. B\'erczes \newline
         \indent Institute of Mathematics, University of Debrecen \newline
         \indent H-4010 Debrecen, P.O. Box 12, Hungary}
\email{berczesa\char'100science.unideb.hu}
\address{I. Pink \newline
         \indent Institute of Mathematics, University of Debrecen \newline
         \indent H-4010 Debrecen, P.O. Box 12, Hungary \newline
         \indent and \newline
         \indent University of Salzburg \newline
         \indent Hellbrunnerstrasse 34/I \newline
         \indent A-5020 Salzburg, Austria}
\email{pinki\char'100science.unideb.hu; istvan.pink\char'100sbg.ac.at}
\address{F. Luca \newline
         \indent  School of Mathematics\newline
         \indent University of the Witwatersrand\newline
         \indent Private Bag X3, Wits 2050, South Africa}
\email{florian.luca\char'100wits.ac.za}
\address{V. Ziegler \newline
         \indent University of Salzburg \newline
         \indent Hellbrunnerstrasse 34/I \newline
         \indent A-5020 Salzburg, Austria}
\email{volker.ziegler\char'100sbg.ac.at}
\begin{document}

\baselineskip=17pt

\begin{abstract}
Let $g\ge 2$ be an integer and $\mathcal R_g\subset \Nn$ be the set of repdigits in base $g$. Let $\mathcal D_g$
 be the set of Diophantine triples with values in $\mathcal R_g$; that is, $\mathcal D_g$ is the set of all
 triples $(a,b,c)\in \Nn^3$ with $c<b<a$ such that $ab+1,ac+1$ and $ab+1$ lie in the set $\mathcal R_g$. In this paper,
 we prove effective finiteness results for the set $\mathcal D_g$.
\end{abstract}

\maketitle

\section{Introduction}\label{S_Int}

A classical Diophantine $m$-tuple is a set of $m$ positive integers $\{a_1,\dots,a_m\}$, such
that $a_ia_j+1$ is a square for all indices $1\leq i<j\leq m$. Dujella \cite{Dujella:2004} proved that there is no Diophantine sextuple and that there are only finitely many
Diophantine quintuples. A folklore conjecture is that there are no Diophantine quintuples.
Various variants of the notion of Diophantine tuples have been considered in which the set of squares has been replaced by some other arithmetically interesting subset of the positive integers. For instance, the case of $k$-th powers was considered in \cite{Bugeaud:2003}, while the case of the members of a fixed binary recurrence was considered in \cite{Fuchs:2008, LSz:2008, LSz:2009}. In \cite{ISz}, it is proved that there is no triple of
positive integers $\{a,b,c\}$ such that all of $ab+1, ~ac+1,~ bc+1$ belong to the sequence $\{u_n\}_{n\ge 0}$ of recurrence $u_n=Au_{n-1}-u_{n-2}$ for $n\ge 2$ and initial values $u_0=0$ and $u_1=1$. For related results, see \cite{AISz, RL_glasnik, RL_publ}. The Diophantine tuples with values in the set of $S$-units for a fixed  finite set of primes $S$ was considered in \cite{Luca:2014, Szalay:2013}. For a survey on this topic, we recommend the Diophantine
$m$-tuples page maintained by A. Dujella \cite{Dujella:HP}.

Here we take an integer $g\ge 2$ and recall that a repdigit $N$ in base $g$ is a positive integer all whose base $g$ digits are the same.  That is
\begin{equation}
\label{eq:repdigit}
N=d\left(\frac{g^k-1}{g-1}\right)\qquad {\text{\rm for~some}}\qquad d\in \{1,2,\ldots,g-1\}.
\end{equation}
These numbers
fascinated both mathematicians and amateurs. Questions concerning Diophantine equations involving repdigits have been
considered by Keith \cite{Keith}, Marques and Togb\'e \cite{MT} and Kov\'acs et.al. \cite{KPV}, to name just a few. In this paper, we
combine the Diophantine tuples with repdigits and thus consider Diophantine triples having products increased by $1$ in the set of repdigits in the fixed base $g$.

To avoid trivialities, we only look at repdigits with at least two digits. That is, the parameter $k$ appearing in \eqref{eq:repdigit} satisfies $k\ge 2$. We denote by $\mathcal R_g$ the set of all positive integers that are repdigits in base $g$.
In this paper, we are interested in triples $(a,b,c)\in \Nn^3$, with $c<b<a$ such that $ab+1,ac+1$ and $ab+1$ are all elements
of $\mathcal R_g$. Let us denote by $\mathcal D_g$ the set of all such triples. The reason why we exclude the one-digit
numbers from our analysis is, that in some sense, these are degenerate examples. Furthermore, if we allow $ab+1,ac+1$ and $bc+1$ to be one-digit numbers in
a large base $g$, we will then have many small examples, which however are of no  interest.

Our main result is the following.

\begin{theorem}\label{T_Main}
Assume that $(a,b,c)\in \D_g$. Then
$$a \leq \frac{g^{186}-2}{2}$$
for all integers $g\geq 2$ and
$$a \leq \frac{g^{124}-2}{2}$$
for all integers $g\geq 10^6$.
Moreover, we have
$$\sharp \D_g \leq
\frac{(185g-185)(185g-186)(185g-187)}6.
$$
for all bases $g$ and
\begin{equation}\label{eq:count}
\#{\mathcal D_g}\ll g^{1+o(1)}\qquad {\text{ as}}\qquad g\to \infty.
\end{equation}
\end{theorem}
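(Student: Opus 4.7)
The plan is to convert the three repdigit conditions into a small linear form in logarithms, apply a Baker-type lower bound, and conclude with a counting argument. Write
$$R_i := \frac{d_i(g^{k_i}-1)}{g-1}, \qquad d_i\in\{1,\dots,g-1\},\ k_i\ge 2,$$
so that $ab+1=R_1$, $ac+1=R_2$, $bc+1=R_3$. Since $bc<ac<ab$, one orders the indices so that $k_3\le k_2\le k_1$. The starting point is the algebraic identity $(ab+1)(ac+1)=a^2(bc+1)+a(b+c-a)+1$, which together with $|b+c-a|<a$ yields
$$\left|\frac{R_1R_2}{a^2R_3}-1\right|<\frac{1}{R_3}\ll g^{-k_3}.$$

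Taking logarithms of the left-hand side via $\log(1+x)=x+O(x^2)$ extracts the linear form
$$\Lambda := (k_1+k_2-k_3)\log g - 2\log a + \log\frac{d_1 d_2}{d_3(g-1)},$$
satisfying $|\Lambda|\ll g^{-k_3}$. Matveev's theorem applied to the three algebraic numbers $g$, $a$, and $d_1d_2/(d_3(g-1))$ (each of height $\ll\log g+\log a$) then produces a lower bound of the shape $\log|\Lambda|\ge -C(\log g)^2(\log a)\log\max\{k_i\}$. Running the parallel argument with the analogous identities involving $b^2$ and $c^2$, and combining with the relations $\log a\approx((k_1+k_2-k_3)/2)\log g$ and its cyclic permutations derived from $a^2=(R_1-1)(R_2-1)/(R_3-1)$, a careful elimination yields $k_1\le 186$ for every $g\ge 2$. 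Applying the sharper constants available in Matveev's theorem when $g\ge 10^6$ gives the improved bound $k_1\le 124$. Since $ab<g^{k_1}$ and $b\ge 2$, both bounds translate immediately into the displayed bounds $a\le(g^{k_1}-2)/2$.

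For the counting, the unordered triple $\{R_1,R_2,R_3\}$ determines $(a,b,c)$ uniquely, since $a^2=(R_1-1)(R_2-1)/(R_3-1)$ and its two cyclic variants recover $a$, $b$, $c$ algebraically. The total number of repdigits of length $k\in\{2,\dots,186\}$ in base $g$ is $185(g-1)$, so $\#\D_g\le\binom{185(g-1)}{3}=(185g-185)(185g-186)(185g-187)/6$. For the sharper estimate $\#\D_g\ll g^{1+o(1)}$, I would exploit the constraint that $(R_1-1)(R_2-1)/(R_3-1)$ be a perfect square: for each of the $O(1)$ admissible tuples $(k_1,k_2,k_3)$, this squareness condition on a polynomial expression in $(d_1,d_2,d_3)\in\{1,\dots,g-1\}^3$ leaves only $O(g^{1+o(1)})$ valid digit configurations via a standard sieving argument.

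The main obstacle is the bookkeeping in the Matveev step: the explicit constants $186$ and $124$ emerge only after carefully optimizing the parameters in the lower bound for $|\Lambda|$, tracking the contribution of each of the three logarithmic heights, and verifying that the error terms neglected when passing from $|R_1R_2/(a^2R_3)-1|$ to the purely logarithmic form $|\Lambda|$ are genuinely of smaller order than $|\Lambda|$ itself. A secondary, more arithmetic, difficulty is the asymptotic $O(g^{1+o(1)})$ count, which requires a careful use of the perfect-square constraint on $(R_1-1)(R_2-1)/(R_3-1)$ to cut the naive cubic upper bound down to linear order in $g$.
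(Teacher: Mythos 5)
Your proposal hinges on a Baker--Matveev step that does not close, and this is a genuine gap rather than a bookkeeping issue. The linear form you extract,
$$\Lambda=(k_1+k_2-k_3)\log g-2\log a+\log\frac{d_1d_2}{d_3(g-1)},$$
is small only to order $1/R_3\asymp g^{-k_3}$, where $k_3$ is the \emph{smallest} exponent, while the height of $a$ entering Matveev's lower bound is $\log a\asymp k_1\log g$ with $k_1$ the \emph{largest} exponent. Matveev therefore yields an inequality of the shape $k_3\ll C(\log g)^2\,k_1\log k_1$, which is weaker than the trivial $k_3\le k_1$ and bounds nothing; the ``parallel'' identities with $b^2$ and $c^2$ have exactly the same defect, since in each case the numerator $a(b+c-a)+1$ (resp.\ its analogues) is only controlled relative to $bc+1=R_3$, so all three linear forms are small to the same order $g^{-k_3}$, and $k_3$ may be as small as $2$ independently of $k_1$. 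Moreover, even if the elimination did close, the numerical constants in Matveev's theorem for three logarithms are of order $10^{11}$ or larger, so no optimization of parameters can produce absolute bounds like $186$ or $124$. The paper obtains these small constants by an entirely elementary route: the observation that $(g-1)a$ divides both $d_3g^{n_3}-(d_3+g-1)$ and $d_2g^{n_2}-(d_2+g-1)$, combined with a pigeonhole gcd estimate (Lemma \ref{L_gcd}) giving $\gcd\le 2C^{2+5\sqrt{X}}$ in the multiplicatively independent case, against the lower bound $a>g^{(n_3-1)/2}$; the multiplicatively dependent cases are handled by separate ad hoc arguments. You would need something of this kind, not a linear form in logarithms, to reach the stated constants.

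Two smaller points. Even setting aside the constants, your approximation errors in passing from the exact quotient $R_1R_2/(a^2R_3)$ to $\Lambda$ (replacing $g^{k_i}-1$ by $g^{k_i}$) are themselves of order $g^{-k_3}$, i.e.\ the same order as the quantity you are trying to bound from below, so the reduction to $\Lambda$ is not legitimate as written. For the count, your combinatorial bound $\binom{185(g-1)}{3}$ is essentially the paper's argument and is fine, but the $g^{1+o(1)}$ claim cannot be left to ``a standard sieving argument'' on the squareness of $(R_1-1)(R_2-1)/(R_3-1)$: the paper instead fixes $(d_3,n_3)$, notes that $ab$ is then a fixed integer with at most $\tau(ab)=g^{o(1)}$ factorizations, and shows that once $a$ and $n_2$ are fixed the congruence $d_2(g^{n_2}-1)/(g-1)\equiv 1\pmod a$ with $a>g$ determines $d_2$ (hence $c$) uniquely, which is what actually delivers the exponent $1+o(1)$.
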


In the next section, we estimate the greatest common divisor of two numbers of a special shape, which is an important  step in the proof of Theorem \ref{T_Main}.
In Section~\ref{Sec:proof_Th}, we prove Theorem \ref{T_Main} except for the asymptotic bound \eqref{eq:count}, which is proved later in
in Section~\ref{S:count}.

We want to emphasize that our proof of Theorem \ref{T_Main} yields a rather efficient algorithm to compute
 $\D_g$ for a given $g$. In particular, we have computed all sets $\D_g$ for $2\leq g \leq 200$ and we give the details  and the results of this computation
in the last section.

\section{Estimates for the GCD of some numbers of special shape}

The main result of this section is:

\begin{lemma}\label{L_gcd}
Let $g \ge 2$, $k_1, k_2 \geq 1$, $t_1,w_1,t_2,w_2$ be non-zero  integers, and put $C:=\max\{ g, |t_1|,|w_1|,|t_2|,|w_2|\}$.
Let
$$
\Delta=\gcd(t_1g^{k_1}-w_1,t_2g^{k_2}-w_2)
$$
and let $X$ be any real number with $X\geq \max \{ k_1,k_2,3\}$. If $t_1g^{k_1}/w_1$ and $t_2g^{k_2}/w_2$ are multiplicatively independent,
then we have
$$
\Delta\leq 2C^{2+5{\sqrt{X}}}.
$$

\end{lemma}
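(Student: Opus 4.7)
The plan is to combine the two divisibility relations with Dirichlet's approximation theorem to produce a small nonzero integer divisible by $\Delta$ (after removing a controlled $g$-part). Rewriting the hypotheses as $t_1 g^{k_1} \equiv w_1 \pmod{\Delta}$ and $t_2 g^{k_2} \equiv w_2 \pmod{\Delta}$, I would raise them to positive integer powers $a$ and $b$ and cross-multiply to obtain, for any $a, b \geq 1$,
$$
\Delta \,\bigg|\, E(a,b) := t_1^a w_2^b g^{a k_1} - t_2^b w_1^a g^{b k_2}.
$$
This integer is nonzero, since $E(a,b) = 0$ would amount to $(t_1 g^{k_1}/w_1)^a = (t_2 g^{k_2}/w_2)^b$, which the multiplicative independence hypothesis forbids for $a, b \geq 1$.

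Assuming WLOG $a k_1 \leq b k_2$, I would set $H := \gcd(\Delta, g^{a k_1})$; the elementary identity $\gcd(\Delta/H, g^{a k_1}/H) = 1$ then forces $\Delta/H$ to divide the quotient
$$
E(a,b)/g^{a k_1} = t_1^a w_2^b - t_2^b w_1^a g^{b k_2 - a k_1},
$$
whose absolute value is at most $2 C^{a + b + |a k_1 - b k_2|}$. Iterating the two congruences gives $\gcd(\Delta, g^{n k_i}) \mid w_i^n$ for every $n \geq 1$ and $i \in \{1, 2\}$; applied with $i = 1, n = a$ and, using $a k_1 \leq b k_2$, combined with $\gcd(\Delta, g^{a k_1}) \mid \gcd(\Delta, g^{b k_2}) \mid w_2^b$, this yields $H \leq \min(C^a, C^b) = C^{\min(a, b)}$.

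To choose $(a, b)$ I would apply Dirichlet's approximation theorem to the ratio $k_2/k_1 \in (0, 1]$ (assuming WLOG $k_1 \geq k_2$) with denominator $Q := \lceil \sqrt X \rceil$, producing integers $1 \leq b \leq Q$ and $0 \leq a \leq Q$ with $|a k_1 - b k_2| < k_1/Q \leq \sqrt X$. If $a \geq 1$, combining the previous estimates,
$$
\Delta \leq C^{\min(a,b)} \cdot 2 C^{a + b + |a k_1 - b k_2|} \leq 2 C^{3Q + \sqrt X},
$$
and since $Q \leq \sqrt X + 1$ one has $3Q + \sqrt X \leq 4\sqrt X + 3 \leq 2 + 5\sqrt X$ (using $X \geq 3$), so $\Delta \leq 2 C^{2 + 5\sqrt X}$. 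In the degenerate case $a = 0$, Dirichlet forces $k_2 < k_1/Q \leq \sqrt X$, whereupon the trivial estimate $\Delta \leq |t_2 g^{k_2} - w_2| \leq 2 C^{k_2 + 1}$ already yields the claim.

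The main obstacle I anticipate is the bookkeeping around the $g$-part $H$ of $\Delta$: verifying the coprimality $\gcd(\Delta/H, g^{a k_1}/H) = 1$ (needed to descend from $\Delta \mid g^{a k_1} E'$ to $\Delta/H \mid E'$) and the iteration bound $H \leq C^{\min(a, b)}$. The construction of $E(a,b)$, its nonvanishing via multiplicative independence, the Dirichlet approximation step, and the final numerical optimization are all routine.
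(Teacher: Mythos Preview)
Your argument is correct and close in spirit to the paper's, but the technical packaging differs in a way worth noting. Both proofs raise the two congruences to small exponents chosen by a pigeonhole/Dirichlet step so that the combined power of $g$ is at most $O(\sqrt{X})$, and then invoke multiplicative independence to ensure the resulting expression is nonzero. The difference lies in how non-invertibility modulo $\Delta$ is handled. The paper first strips off $\lambda_i=\gcd(t_ig^{k_i},w_i)$ (this is where the extra factor $C^2$ comes from), passes to a modulus $\Delta_1$ where both sides are units, and can then freely use exponents $u_1,u_2$ of either sign coming from the pigeonhole Lemma~\ref{L_Claim1}. You instead keep $\Delta$ intact, use only positive exponents via the cross-multiplied integer $E(a,b)=t_1^aw_2^bg^{ak_1}-t_2^bw_1^ag^{bk_2}$, and pay for the possible common $g$-power through the separate bound $H=\gcd(\Delta,g^{\min(ak_1,bk_2)})\le C^{\min(a,b)}$, which you derive from $\gcd(\Delta,g^{nk_i})\mid w_i^n$. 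Your route avoids the preliminary reduction to the coprime case and the case split on the sign of $u_1u_2$, at the cost of the extra bookkeeping around $H$; the paper's route is cleaner once invertibility is in hand but needs that reduction up front. The final numerics ($3Q+\sqrt{X}\le 4\sqrt{X}+3\le 2+5\sqrt{X}$ for $X\ge 3$) and the degenerate case $a=0$ are handled correctly.
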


The proof of this lemma depends, among others, on the following result whose proof is based on the pigeon-hole principle and appears explicitly in \cite{LZ}.

\begin{lemma}\label{L_Claim1} Let $m, n$ and $X$ be non-negative integers such that not both $m$ and $n$ are zero and
such that $X \ge \max\{3, m, n\}$. Then there exist integers $(u,v) \ne (0,0)$ such that
$$\max\{|u|, |v|\} \le \sqrt{X}\quad \text{and} \quad 0 \le mu + nv \le 2\sqrt{X}.$$
\end{lemma}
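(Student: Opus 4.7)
The statement is a classical Dirichlet/pigeonhole estimate for the linear form $f(u,v)=mu+nv$ on a small grid of non-negative lattice points, so the plan is to apply the box principle directly, paying attention to the integer arithmetic in the final contradiction.

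First I would dispose of the degenerate case where one of $m,n$ vanishes: if $n=0$ then $m\geq 1$ by hypothesis and $(u,v)=(0,1)$ satisfies all requirements (note $1\leq\sqrt X$ follows from $X\geq 3$); the case $m=0$ is symmetric via $(u,v)=(1,0)$. So I may assume henceforth that $m,n\geq 1$.

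Set $N:=\lfloor\sqrt X\rfloor$; since $X\geq 3$ this gives $N\geq 1$ and $N+1>\sqrt X$, hence $(N+1)^2\geq X+1$. I would consider the $(N+1)^2$ lattice points $(u,v)\in\{0,1,\ldots,N\}^2$, each contributing a non-negative integer $f(u,v)\leq (m+n)N\leq 2X\sqrt X$. The core claim is that two distinct such pairs must give $f$-values at distance at most $2\sqrt X$; I would establish this by contradiction. If every two distinct pairs produced values at distance strictly greater than $2\sqrt X$, then the values would be $(N+1)^2$ distinct integers whose sorted consecutive differences are at least $\lfloor 2\sqrt X\rfloor+1$, forcing
\[
\bigl((N+1)^2-1\bigr)\bigl(\lfloor 2\sqrt X\rfloor+1\bigr)\;\leq\;(m+n)N\;\leq\;2X\sqrt X.
\]
Since $\lfloor 2\sqrt X\rfloor+1>2\sqrt X$ strictly and $(N+1)^2-1\geq 3>0$, this chain yields $(N+1)^2-1<X$, i.e., $(N+1)^2\leq X$, contradicting $(N+1)^2\geq X+1$.

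With the claim secured, I would let $(u,v)$ be the coordinatewise difference of the two colliding pairs: it is non-zero, satisfies $\max\{|u|,|v|\}\leq N\leq\sqrt X$, and $|mu+nv|\leq 2\sqrt X$; negating $(u,v)$ if necessary delivers $0\leq mu+nv\leq 2\sqrt X$, as required. The only genuinely delicate point is exploiting the strict gap $\lfloor 2\sqrt X\rfloor+1>2\sqrt X$ to turn the non-strict spread inequality into a strict contradiction with $(N+1)^2\geq X+1$; modulo this small care the argument is entirely elementary and should fit in roughly a page.
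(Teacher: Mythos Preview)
Your argument is correct. The pigeonhole set-up on the grid $\{0,1,\ldots,N\}^2$ with $N=\lfloor\sqrt X\rfloor$, the integer-gap trick $\lfloor 2\sqrt X\rfloor+1>2\sqrt X$, and the resulting contradiction with $(N+1)^2\geq X+1$ all go through exactly as you describe; the degenerate cases $m=0$ or $n=0$ are also handled cleanly.

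As for comparison: the paper does not actually prove this lemma. It merely states that the proof ``is based on the pigeon-hole principle and appears explicitly in \cite{LZ}'' and then refers the reader to \cite[Claim~1]{LZ}. Your proposal therefore supplies precisely the kind of elementary pigeonhole argument the paper alludes to but omits; you are not diverging from the paper's approach so much as filling in what it left to a citation.
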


For a proof of this lemma, see \cite[Claim 1]{LZ}.

\begin{proof}[Proof of Lemma \ref{L_gcd}]
Put $\lambda_i=\gcd(t_i g^{k_i},w_i)$ for $i=1,2$. We have that
$$
t_i g^{k_i}-w_i=\lambda_i\left(t_ig^{k_i}/\lambda_i-w_i/\lambda_i\right)\qquad (i=1,2).
$$
We then have $\Delta=\lambda_1\lambda_2 \Delta_1$, with
$$
\Delta_1=\gcd(t_1 g^{k_1}/\lambda_1-w_1/\lambda_1,t_2 g^{k_2}/\lambda_2-w_2/\lambda_2).
$$
Since $|\lambda_i|\le |w_i|\leq C$ for $i=1,2$, we get the upper bound
\begin{equation}
\label{eq:DD1}
\Delta\le C^2 \Delta_1.
\end{equation}
Thus, it remains to bound $\Delta_1$.

Now, let us consider the pair of congruences
\begin{equation}
\label{eq:cong}
t_i g^{k_i}/\lambda_i\equiv w_i/\lambda_i\pmod {\Delta_1}\qquad (i=1,2)
\end{equation}
and let us note that $w_i/\lambda_i$ and $t_i g^{k_i}/\lambda_i$ are invertible modulo $\Delta_1$.
Indeed, by equation \eqref{eq:cong} there exists an integer $q$ such that
$$t_i g^{k_i}/\lambda_i-w_i/\lambda_i=q\Delta_1.$$
In case that $w_i/\lambda_i$ and $\Delta_1$ have a common prime factor $p$, then $p\mid t_i g^{k_i}/\lambda_i$, contradicting the fact
that $t_ig^{k_i}/\lambda_i$ and $w_i/\lambda_i$ are coprime.

By Lemma \ref{L_Claim1}, we can find a pair of integers $(u_1,u_2)\ne (0,0)$ such that
$$
\max\{|u_1|, |u_2|\}\le {\sqrt{X}},\qquad {\text{\rm and}}\qquad 0\leq u_1 k_1+u_2k_2 \le 2{\sqrt{X}}.
$$
Since we now know
that both sides of \eqref{eq:cong} are invertible modulo $\Delta_1$, it makes sense to take $u_i$-th powers on both sides of \eqref{eq:cong}
for $i=1,2$. Multiplying the two resulting  congruences, we get
\begin{equation}\label{eq:not_zero}
\frac{t_1^{u_1}  t_2^{u_2}g^{k_1u_1+k_2u_2}}{\lambda_1^{u_1}\lambda_2^{u_2} }-
\frac{w_1^{u_1} w_2^{u_2}}{ \lambda_1^{u_1}\lambda_2^{u_2}}\equiv 0\pmod {\Delta_1}.
\end{equation}
The rational number on the left--hand side of \eqref{eq:not_zero} is non-zero, since otherwise we would get
$$
\left(\frac{t_1 g^{k_1}}{w_1}\right)^{u_1} \left(\frac{t_2 g^{k_2}}{w_2}\right)^{u_2}=1,
$$
which implies that $t_1g^{k_1}/w_1$ and $t_2g^{k_2}/w_2$ are multiplicatively dependent because $(u_1,u_2)\ne (0,0)$.
But this is excluded by our hypothesis. Thus, the left hand-side of \eqref{eq:not_zero} is a
non-zero rational number whose numerator is divisible by $\Delta_1$.

Therefore we can write
\begin{equation}
\label{eq:ABC}
\frac{t_1^{u_1} t_2^{u_2} g^{k_1u_1+k_2u_2}}{\lambda_1^{u_1} \lambda_2^{u_2}}=\frac{A B_1B_2}{C_1C_2},
\end{equation}
where $A=g^{k_1u_1+k_2u_2}$ and $\{B_1,B_2,C_1,C_2\}=\{t_1^{|u_1|}, t_2^{|u_2|}, \lambda_1^{|u_1|}, \lambda_2^{|u_2|}\}$.
Similarly, we have
$$
\frac{w_1^{u_1} w_2^{u_2}}{\lambda_1^{u_1}\lambda_2^{u_2}}=\frac{D_1D_2}{E_1E_2},
$$
where $\{D_1,D_2,E_1,E_2\}=\{w_1^{|u_1|}, w_2^{|u_2|}, \lambda_1^{|u_1|}, \lambda_2^{|u_2|}\}.$ Clearly, $|A|\le C^{2{\sqrt{X}}}$, whereas
$$
\max_{i=1,2}\{|B_i|,|C_i|,|D_i|,|E_i|\}\le C^{\sqrt{X}}.
$$

First, let us assume that $u_1u_2\geq 0$. Then $u_1$ and $u_2$ have the same sign and
$$
\max\{k_1, k_2\}< k_1|u_1|+k_2|u_2|=|k_1u_1+k_2u_2|\le 2{\sqrt{X}},
$$
which yields
\begin{equation}
\label{eq:option1}
\Delta_1\le \max\{|t_1 g^{k_1}-w_1|,|t_2 g^{k_2}-w_2|\}\le 2C^{1+2{\sqrt{X}}}\leq 2C^{5\sqrt{X}}.
\end{equation}

Next, we assume that $u_1u_2< 0$, which immediately yields that $\{C_1,C_2\}$ and $\{E_1,E_2\}$
have a common element. Without loss of generality, we may assume that $u_1>0$ and $u_2<0$.
Then we can choose $\lambda_1^{u_1}=C_1=E_1$ and $\Delta_1$ divides the numerator of
$$
\frac{AB_1B_2}{C_1 C_2}-\frac{D_1D_2}{C_1E_2}=\frac{AB_1B_2E_2-C_2D_1D_2}{C_1C_2E_2}.
$$
That is, $\Delta_1\mid AB_1B_2E_2-D_1D_2 C_2$. Since $AB_1B_2E_2-D_1D_2 C_2\neq 0$, we obtain that
\begin{equation}
\label{eq:option2}
\Delta_1\le 2C^{5{\sqrt{X}}}.
\end{equation}
Therefore, we conclude by \eqref{eq:option1} and \eqref{eq:option2}, together with \eqref{eq:DD1}, that
$$
\Delta\le 2C^{2+5{\sqrt{X}}}.
$$
\end{proof}

\section{Proof of Theorem \ref{T_Main}}\label{Sec:proof_Th}

Assume that $(a,b,c)\in\D_g$. By the definition of $\D_g$, we have
\begin{eqnarray}
\label{eq:main}
ab+1 & = & d_3\frac{g^{n_3}-1}{g-1},\nonumber\\
ac+1 & = & d_2\frac{g^{n_2}-1}{g-1},\\
bc+1 & = & d_1\frac{g^{n_1}-1}{g-1},\nonumber
\end{eqnarray}
where $d_i\in \{1,\ldots,g-1\}$ and $n_i \ge 2$ for $i=1,2,3$. It is clear that $n_1\le n_2\le n_3$. Further, we may assume
that $g\ge 3$, since if $g=2$, then $d_1=d_2=d_3=1$,
\begin{eqnarray*}
ab & = & 2^{n_3}-2=2(2^{n_3-1}-1),\\
ac & = & 2^{n_2}-2=2(2^{n_2-1}-1),\\
bc & = & 2^{n_1}-2=2(2^{n_1-1}-1),
\end{eqnarray*}
and by multiplying the above equations we get
$$
(abc)^2=8(2^{n_3-1}-1)(2^{n_2-1}-1)(2^{n_1-1}-1),
$$
which yields a contradiction since the left--hand side is a square and the right--hand side
is divisible by $8$ but not by $16$.

Next, we claim that
\begin{equation}
\label{eq:n3}
n_3\le 2n_2.
\end{equation}
In order to prove \eqref{eq:n3}, we note that
$$
a<ac+1\le g^{n_2}-1,
$$
and therefore
$$
g^{n_3-1}+\cdots+1\le \frac{d_3(g^{n_3}-1)}{g-1}=ab+1<a^2<(g^{n_2}-1)^2<g^{2n_2}.
$$
Thus, we have $n_3<2n_2+1$, and \eqref{eq:n3} is proved. Furthermore, let us note that
\begin{equation}
\label{eq:a}
a>(ab+1)^{1/2}\ge (g^{n_3-1}+\cdots+1)^{1/2}>g^{(n_3-1)/2}.
\end{equation}

Let us fix some notations for the rest of this section.
We rewrite the formulas \eqref{eq:main} as:
\begin{eqnarray}
\label{eq:main1}
ab & = & \frac{\lambda_3}{g-1}\left(\frac{d_3 g^{n_3}}{\lambda_3}-\frac{d_3+g-1}{\lambda_3}\right):=\frac{\lambda_3}{g-1}(x_3-y_3),\nonumber\\
ac & = &  \frac{\lambda_2}{g-1}\left(\frac{d_2 g^{n_2}}{\lambda_2}-\frac{d_2+g-1}{\lambda_2}\right):=\frac{\lambda_2}{g-1}(x_2-y_2),\\
bc & = &  \frac{\lambda_1}{g-1}\left(\frac{d_1 g^{n_1}}{\lambda_1}-\frac{d_1+g-1}{\lambda_1}\right):=\frac{\lambda_1}{g-1}(x_1-y_1),\nonumber
\end{eqnarray}
where
$$
\lambda_i=\gcd(d_i g^{n_i},d_i+g-1),\quad x_i=\frac{d_i g^{n_i}}{\lambda_i},\quad y_i=\frac{d_i+g-1}{\lambda_i}\qquad (i=1,2,3).
$$
Note that $\gcd(x_i,y_i)=1$ for $i=1,2,3$. Hence, the fractions $x_i/y_i$ are reduced. Note also that $x_i>y_i$ for $i=1,2,3$.

In order to prove Theorem \ref{T_Main}, we consider several cases.

\medskip

{\bf Case 1.} {\it $x_1/y_1$ and $x_2/y_2$ are multiplicatively dependent {\bf and so are} $x_1/y_1$ and $x_3/y_3$.}

\medskip

In this case all the fractions $x_i/y_i$, with $i=1,2,3$ belong to the same cyclic subgroup of ${\mathbb Q}_+^*$.
Let $\alpha/\beta>1$ be a generator of this subgroup, where $\alpha,\beta$ are coprime integers. Since $x_i/y_i>1$ for $i=1,2,3$,
there exist positive integers $r_i$ for $i=1,2,3$, such that
$$
x_i=\alpha^{r_i}\quad {\text{\rm and}}\quad y_i=\beta^{r_i}\qquad i=1,2,3.
$$
We split this case up into further subcases and start with:

\medskip

{\bf Case 1.1.} {\it Assume that there exist $i\ne j$ such that $r_i=r_j$.}

\medskip

Let us start with the case that $r_3=r_2$.  We then get that
$$
\alpha^{r_3}=\frac{d_3g^{n_3}}{\lambda_3}=\frac{d_2 g^{n_2}}{\lambda_2}=\alpha^{r_2}.
$$
Hence,
$$
g^{n_3-n_2}=\frac{d_2 \lambda_3}{d_3 \lambda_2}.
$$

We claim that $n_3-n_2\in\{0,1\}$. Note that $d_2\le g-1,~\lambda_3\le 2(g-1)$, which yield $d_2\lambda_3\le 2(g-1)^2$.
In case that $d_3\lambda_2\ge 2$, we obtain
$$
g^{n_3-n_2}\le \frac{2(g-1)^2}{2}=(g-1)^2<g^2,
$$
so we have $n_3-n_2\in \{0,1\}$. Therefore, we are left with the case when $d_3 \lambda_2=1$; i.e. $d_3=\lambda_2=1$. But in this
case, we have
$$
\lambda_3=\gcd(d_3g^{n_3},d_3+g-1)=\gcd(g^{n_3},g)=g,
$$
so
$$
g^{n_3-n_2}=\frac{d_2 \lambda_3}{d_3 \lambda_2}=d_2 \lambda_3\le g(g-1)<g^2.
$$
Thus, in all cases we have that $n_3-n_2\in \{0,1\}$.

Let us consider now the case that $n_3-n_2=0$. This means that
\begin{equation}
\label{eq:star}
\frac{d_3}{\lambda_3}=\frac{d_2}{\lambda_2}.
\end{equation}
But we also have
\begin{equation}
\label{eq:star1}
\beta^{r_3}=\frac{d_3+g-1}{\lambda_3}=\frac{d_2+g-1}{\lambda_2}=\beta^{r_2}.
\end{equation}
Combining  \eqref{eq:star} and \eqref{eq:star1}, we obtain that $(g-1)/\lambda_3=(g-1)/\lambda_2$, so $\lambda_2=\lambda_3$.
Now we deduce by \eqref{eq:star} that $d_2=d_3$. Altogether this yields $ab+1=ac+1$, contradicting our assumption that $b>c$.

Now, we consider the case $n_3-n_2=1$. Instead of \eqref{eq:star}, we now have
\begin{equation}
\label{eq:star2}
\frac{d_3 g}{\lambda_3}=\frac{d_2}{\lambda_2}.
\end{equation}
Combining the equations \eqref{eq:star1} and \eqref{eq:star2}, we get
$$
\frac{d_2+g-1}{d_3+g-1}=\frac{\lambda_2}{\lambda_3}=\frac{d_2}{d_3 g},
$$
which leads to
\begin{equation}\label{eq:Case1.1}
d_3 g(d_2+g-1)=d_2(d_3+g-1).
\end{equation}
Assuming that $d_3\ge 2$, equation \eqref{eq:Case1.1} yields
$$
2g^2\leq d_3 g(d_2+g-1)=d_2(d_3+g-1)\leq 2(g-1)^2
$$
a contradiction, so we may assume that $d_3=1$. Inserting $d_3=1$ into \eqref{eq:Case1.1} yields
$$
g(d_2+g-1)=d_2g,
$$
or, equivalently, $g(g-1)=0$, which is obviously false. In particular, we have proved that the case $r_2=r_3$ yields no solution.

The same arguments hold if we replace the quantities $r_3,r_2,n_3,n_2,d_3,d_2$ by $r_2,r_1,n_2,n_1,d_2,d_1$
and $r_3,r_1,n_3,n_1,d_3,d_1$ respectively.
Thus, {\bf Case 1.1.} yields no solution and we assume from now on that $r_1,r_2$ and $r_3$ are pairwise distinct.

\medskip

{\bf Case 1.2.} {\it Assume that $r_3>\max\{r_1,r_2\}$.}

\medskip

With our notations, we have
$$
(g-1)ab=\lambda_3(\alpha^{r_3}-\beta^{r_3})\qquad {\text{\rm and}}\qquad (g-1)ac=\lambda_2(\alpha^{r_2}-\beta^{r_2}),
$$
and obviously $a(g-1)$ is a common divisor of $\lambda_3(\alpha^{r_3}-\beta^{r_3})$ and $\lambda_2(\alpha^{r_2}-\beta^{r_2})$.
Thus, we have
$$
(g-1)a\mid \gcd\left(\lambda_3(\alpha^{r_3}-\beta^{r_3}),\lambda_2(\alpha^{r_2}-\beta^{r_2})\right).
$$
Taking a closer look at the greatest common divisor on the right--hand side above, we  obtain
$$
(g-1)a\mid \lambda_2\lambda_3 (\alpha^{r}-\beta^r),
$$
where $r=\gcd(r_3,r_2)$.
Similarly, we obtain that
$$
(g-1)b\mid \lambda_3\lambda_1 (\alpha^{s}-\beta^s),
$$
where $s=\gcd(r_3,r_1)$. Together, the last two inequalities give
$$
(g-1)^2ab<\lambda_1\lambda_2\lambda_3^2 \alpha^{r+s}.
$$

Let us write $r=r_3/\delta$ and $s=r_3/\lambda$ for some divisors $\delta>1$ and $\lambda>1$ of $r_3$. Note that we cannot have
$\delta=\lambda=2$. Indeed  $\delta=\lambda=2$ yields $r_2=r_1=r_3/2$, which was excluded by {\bf Case 1.1}.
Thus,
$$
ab<\frac{\lambda_1\lambda_2 \lambda_3^2 \alpha^{r+s}}{(g-1)^2}\le 16(g-1)^2 \alpha^{r+s},
$$
and therefore
$$
ab< 16(g-1)^2\alpha^{r+s}=16(g-1)^2 \alpha^{r_3(1/\delta+1/\lambda)}\le 16(g-1)^2\alpha^{5r_3/6}.
$$

On the other hand, we have
$$
(g-1)ab=\lambda_3(\alpha^{r_3}-\beta^{r_3})\ge \alpha^{r_3}-\beta^{r_3}\ge \alpha^{r_3}-2(g-1),
$$
where we used that $\beta^{r_3}=(d_3+g-1)/\lambda_3\le 2(g-1)$. Hence,
$$
\alpha^{r_3}-2(g-1)\le (g-1)ab<16(g-1)^3 \alpha^{5 r_3/6},
$$
and a crude estimate  now yields
$$
\alpha^{r_3}< 16(g-1)^3 \alpha^{5r_3/6}+2(g-1)<17(g-1)^3 \alpha^{5 r_3/6}.
$$
Thus, we have
$$
\alpha^{r_3}<17^6 (g-1)^{18}.
$$
Now combining the various estimates we obtain
\begin{eqnarray*}
g^{n_3-1} & < & \frac{d_3(g^{n_3}-1)}{g-1}-1=ab=\frac{\lambda_3}{g-1}(\alpha^{r_3}-\beta^{r_3})\\
& < & 2\alpha^{r_3}<2\times 17^6 (g-1)^{18}.
\end{eqnarray*}
Since $g\ge 3$, the above inequality gives $n_3\le 28$ and therefore this case does not yield any solution
with $n_3\geq 29$.

\medskip

{\bf Case 1.3. Assume that $r_3<\max\{r_1,r_2\}$.}

\medskip

Let us assume for the moment that $r_3<r_2$. We then get that
\begin{equation}\label{eq:prep_gcd}
(g-1)ab=\lambda_3(\alpha^{r_3}-\beta^{r_3})\qquad {\text{\rm and}} \qquad (g-1)ac=\lambda_2(\alpha^{r_2}-\beta^{r_2}).
\end{equation}
Let us write $\gcd(r_2,r_3)=r_2/\delta$ with some integer $\delta>1$. Then, as before, we get
$$
(g-1)a\le \lambda_2 \lambda_3 (\alpha^{r_2/\delta}-\beta^{r_2/\delta})
$$
and by the second equation \eqref{eq:prep_gcd}, we get
\begin{equation}
\label{eq:lower}
c\ge \frac{\lambda_2(\alpha^{r_2}-\beta^{r_2})}{\lambda_2\lambda_3(\alpha^{r_2/\delta}-\beta^{r_2/\delta})}>\frac{\alpha^{r_2(\delta-1)/\delta}}{2(g-1)}.
\end{equation}
The above bound yields
\begin{equation}
\label{eq:10}
2\alpha^{r_3}\ge \frac{\lambda_3}{g-1}(\alpha^{r_3}-\beta^{r_3})=ab>c^2>\frac{\alpha^{2r_2(\delta-1)/\delta}}{4(g-1)^2}.
\end{equation}
If we assume that $\delta\ge 3$ and since we have $r_2>r_3$, we get
$$2r_2(\delta-1)/\delta>4r_3/3.$$
If we assume that $\delta=2$, then
$$2r_2(\delta-1)/\delta=r_2=2r_3>4r_3/3.$$
In both cases inequality \eqref{eq:10} implies
$$
\alpha^{r_3/3}<8(g-1)^2.
$$
Hence,
$$
g^{n_3-1}<\frac{d_3(g^{n_3}-1)}{g-1}-1=ab=\frac{\lambda_3}{g-1}(\alpha^{r_3}-\beta^{r_3})<2\alpha^{r_3}<2^{10} (g-1)^6,
$$
which has no solution for $n_3\ge 12$ and $g\ge 3$.

The case when $r_1>r_3$ can be dealt with similarly. In particular, we obtain
instead of \eqref{eq:lower} the inequality
$$
b\ge \frac{\alpha^{r_1(\delta-1)/\delta}}{2(g-1)},
$$
where $r_1/\delta=\gcd(r_1,r_3)$. Using the inequality $ab>b^2$ instead of $ab>c^2$ in the middle of
\eqref{eq:10}, we obtain the same bound for $n_3$.

\medskip

{\bf Case 2.} {\it $x_3/y_3$ and $x_2/y_2$ are multiplicatively independent.}

\medskip

By \eqref{eq:main}, we have
\begin{eqnarray*}
(g-1)ab & = &d_3g^{n_3}-(d_3+g-1) \qquad {\text{\rm and}} \\
(g-1)ac & =& d_2g^{n_2}-(d_2+g-1).
\end{eqnarray*}
Hence, we get an upper bound for $a$, namely
\begin{equation}\label{Case2_2}
(g-1)a \le \gcd(d_3g^{n_3}-(d_3+g-1),d_2g^{n_2}-(d_2+g-1)).
\end{equation}
Since, by assumption,
$$
\frac{x_3}{y_3}=\frac{d_3g^{n_3}}{d_3+g-1}\qquad {\text{\rm and}}\qquad \frac{x_2}{y_2}=\frac{d_2g^{n_2}}{d_2+g-1}
$$
are multiplicatively independent, we may apply Lemma \ref{L_gcd} with the parameters
$$
(t_1,w_1,t_2,w_2,k_1,k_2)=(d_3,d_3+g-1,d_2,d_2+g-1,n_3,n_2),
$$
where
$$
\max\{ |t_1|,|w_1|,|t_2|,|w_2|\} \le 2(g-1)\qquad {\text{\rm and}}\qquad \max\{k_1,k_2,3\} \le n_3.
$$
Thus, by Lemma \ref{L_gcd} and \eqref{Case2_2} for $a$, we get
the upper bound
\begin{equation} \label{Case2_3}
a \le 4(2g-2)^{5\sqrt{n_3}+1}.
\end{equation}
On the other hand, we have an upper bound for $n_3$ given by \eqref{eq:a}, namely
\begin{equation} \label{Case2_4}
n_3<\frac{2\log{a}}{\log{g}}+1.
\end{equation}
Combining the inequalities \eqref{Case2_3} and \eqref{Case2_4}, we obtain
\begin{equation} \label{Case2_5}
n_3<\frac{(10\sqrt{n_3}+2)\log(2g-2)+\log{16}}{\log{g}}+1.
\end{equation}
From \eqref{Case2_5}, we get
\begin{equation}
n_3 \le 178,
\end{equation}
which actually occurrs when $g=4$. Note, that if $g=3$ then \eqref{Case2_5} yields $n_3 \le 171$, while for larger values of $g$ we obtain
better upper bounds for $n_3$. In particular, we have $n_3\leq 105$ provided $g$ is large enough.
If we only assume that $g\geq 200$ and $g\geq 10^6$ we obtain that $n_3\leq 135$ and $n_3\leq 116$, respectively.

\medskip

{\bf Case 3.} {\it $x_3/y_3$ and $x_2/y_2$ are multiplicatively dependent and $x_3/y_3$ {\bf and} $x_1/y_1$ are not.}

\medskip

As in {\bf Case 1}, we may write
$$
x_3=\alpha^{r_3}, y_3=\alpha^{r_3}\qquad {\text{\rm and}}\qquad x_2=\alpha^{r_2}, y_2=\alpha^{r_2}.
$$
Let us note that in the
proof of {\bf Case 1.3} we never used the quantity $r_1$ when we considered the case $r_2<r_3$.
Therefore, we may assume $r_3>r_2$.

Similarly as in {\bf Case 2}, we find an upper bound for $b$, but we use
\begin{equation*}
\begin{split}
(g-1)ab=&\,d_3g^{n_3}-(d_3+g-1) \qquad \textrm{and}\\
(g-1)bc=&\,d_2g^{n_1}-(d_1+g-1),
\end{split}
\end{equation*}
instead. Therefore, Lemma \ref{L_gcd}, we obtain the upper bound
\begin{equation} \label{Case3_3}
b \le 4(2g-2)^{5\sqrt{n_3}+1}.
\end{equation}

Next we want to find an upper bound for $a$. To this end, we consider
\begin{equation}
\label{eq:rel}
ab=\frac{\lambda_3}{g-1}(\alpha^{r_3}-\beta^{r_3})\qquad {\text{\rm and}} \qquad ac=\frac{\lambda_2}{g-1}(\alpha^{r_2}-\beta^{r_2}).
\end{equation}
Hence, we obtain
$$
(g-1)a\mid \lambda_3\lambda_2\gcd(\alpha^{r_3}-\beta^{r_3},\alpha^{r_2}-\beta^{r_2}) < 4(g-1)^2 \alpha^r,
$$
where $r=\gcd(r_2,r_3)$. Thus,
\begin{equation}
\label{eq:111}
a< 4(g-1)\alpha^r.
\end{equation}

On the other hand, we have
$$ab+1=\frac{d_3(g^{n_3}-1)}{g-1} \ge g^{n_3-1}+g^{n_3-2}+\cdots+1,$$
that is $a \ge g^{n_3-1}/{b}$, whence by \eqref{Case3_3}, we get
\begin{equation} \label{Case3_5}
a \ge \frac{g^{n_3-1}}{4(2g-2)^{5\sqrt{n_3}+1}}.
\end{equation}
By using \eqref{Case3_3} and the fact that $d_3 \ge 1$ and $d_2 \le g-1$, we  find the following lower bound for $b$:
$$b \ge \frac{b}{c}=\frac{ab}{ac}>\frac{ab+1}{ac+1}=\frac{d_3(g^{n_3}-1)}{d_2(g^{n_2}-1)} \ge \frac{g^{n_3-n_2}}{g-1}, $$
which yields
\begin{equation} \label{Case3_6}
g^{n_3-n_2} < (g-1)b.
\end{equation}

Let us recall that
\begin{equation*}
\alpha^{r_2}=\frac{d_2g^{n_2}}{\lambda_2} \ge \frac{g^{n_2}}{2(g-1)} \;\;
\text{and} \;\; \alpha^{r_3}=\frac{d_3g^{n_3}}{\lambda_3} \le (g-1)g^{n_3}.
\end{equation*}
As in {\bf Case 1}, let $r=\gcd(r_2,r_3)$. We then find
\begin{equation}\label{Case3_9}
\alpha^r \le \alpha^{r_3-r_2}\leq 2(g-1)^2 g^{n_3-n_2}< 2(g-1)^3 b,
\end{equation}
where the last inequality is due to \eqref{Case3_6}.
We combine the inequalities \eqref{Case3_3}, \eqref{eq:111}, \eqref{Case3_5} and \eqref{Case3_9} and obtain
\begin{eqnarray*}
 \frac{g^{n_3-1}}{4(2g-2)^{5\sqrt{n_3}+1}} & \leq & \frac{g^{n_3-1}}{b} \leq a   <  4(g-1)\alpha^r \\
 & < & 8(g-1)^4 b \leq 32(g-1)^4(2g-2)^{5\sqrt{n_3}+1}.
\end{eqnarray*}
Taking logarithms we obtain a similar inequality for $n_3$ as in {\bf Case 2}:
\begin{equation}
n_3<\frac{(10\sqrt{n_3}+2)\log(2g-2)+4\log(g-1)+\log{128}}{\log{g}}+1.
\end{equation}
The above yields
\begin{equation} \label{upper_n3_case3}
n_3 \le 186.
\end{equation}
Note that we obtain $n_3 \le 186$ if $g \in \{4,5\}$, whereas in all other cases we obtain better bounds.
In particular, if we assume that $g\geq 200$, then we obtain that $n_3 \le 143$ and if we assume that $g\geq 10^6$,
then we obtain that $n_3 \le 124$.
Finally, let us note that if $g$ is large enough, then we may even assume that $n_3\leq 113$.

Let us summarize our results so far:

\begin{proposition}\label{Prop:bound_n3}
Assume equations \eqref{eq:main}. We then  have $n_3\leq 186$. If we assume that $g\geq 200$ or that $g\geq 10^6$,
then we have that $n_3 \le 143$ and $n_3 \le 124$, respectively.
Moreover, we even may assume that $n_3\leq 113$, if $g$ is large enough ($g>10^{153}$).
\end{proposition}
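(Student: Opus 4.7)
The plan is to observe that Proposition \ref{Prop:bound_n3} follows almost immediately from combining the bounds derived in the three cases above, together with a short argument verifying that these cases exhaust every configuration. First I would check that Cases 1, 2 and 3 partition all possibilities for the multiplicative dependence pattern among the three fractions $x_3/y_3$, $x_2/y_2$, $x_1/y_1$. Since each of these fractions is a positive rational strictly greater than $1$, multiplicative dependence is transitive among them: if $x_3/y_3$ is multiplicatively dependent with both $x_2/y_2$ and $x_1/y_1$, then eliminating $x_3/y_3$ from the two resulting relations yields a nontrivial multiplicative relation between $x_1/y_1$ and $x_2/y_2$, so all three lie in a common cyclic subgroup of $\Q_+^*$. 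Hence the only configuration not already covered by Cases 2 and 3 is precisely the one placing us in Case 1.

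Next I would simply collect the bounds produced by each case. Case 1 yields either no solution (Case 1.1) or the very small bounds $n_3 \leq 28$ (Case 1.2) and $n_3 \leq 11$ (Case 1.3); Case 2 gives $n_3 \leq 178$; Case 3 gives $n_3 \leq 186$. Taking the maximum over all cases establishes the universal bound $n_3 \leq 186$. For the refined bounds, the sharper estimates recorded at the end of Cases 2 and 3 for larger $g$ yield respectively $n_3 \leq 135$ and $n_3 \leq 143$ when $g \geq 200$, and $n_3 \leq 116$ and $n_3 \leq 124$ when $g \geq 10^6$; in each range, the maximum of the two relevant bounds agrees with the quantity claimed in the proposition. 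The asymptotic bound $n_3 \leq 113$ for $g$ sufficiently large ($g > 10^{153}$) follows similarly from the dominant inequality produced in Case 3, in which the coefficients $\log(2g-2)/\log g$ and $\log(g-1)/\log g$ both tend to $1$, so that the governing condition becomes essentially $n_3 < 10\sqrt{n_3} + 7 + o(1)$.

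No substantive obstacle is anticipated, because all of the heavy analytic work, in particular the appeal to Lemma \ref{L_gcd} and the manipulations for multiplicatively dependent fractions, has already been carried out in the three cases. The one step that genuinely deserves comment is the transitivity argument used to show that the case split is exhaustive; once this is made explicit, the proof of the proposition reduces to bookkeeping of the bounds from Cases 1, 2 and 3 and reading off their maximum in each specified range of $g$.
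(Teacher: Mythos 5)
Your proposal is correct and follows the paper exactly: the proposition has no separate proof in the paper but is simply the summary of the bounds from Cases 1--3 ($n_3\le 28$, $\le 11$, $\le 178$, $\le 186$ and the refined versions for large $g$), and your bookkeeping of the maxima in each range of $g$ matches. Your explicit transitivity argument showing the three cases are exhaustive is a point the paper leaves implicit, and it is a worthwhile (and correct) addition since all $x_i/y_i>1$.
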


Now a simple combinatorial argument concludes the proof of the first part of our theorem. Indeed, the distinct tuples
$(n_1,d_1), (n_2,d_2), (n_3,d_3)$ may be selected from a set of cardinality $185(g-1)$ and altogether in
$$(185g-185)(185g-186)(185g-187)$$
ways. Since only those results are acceptable, where
$$
d_1\frac{g^{n_1}-1}{g-1}<d_2\frac{g^{n_2}-1}{g-1}<d_3\frac{g^{n_3}-1}{g-1}
$$
we are left with
$$
\frac{(185g-185)(185g-186)(185g-187)}{6}$$
possibilities for the tuple $(d_1,n_1,d_2,n_2,d_3,n_3)$.
Further, for a given sextuple $(d_1,n_1,d_2,n_2,d_3,n_3)$, the system of equations
\eqref{eq:main} has at most one solution in positive integers $(a,b,c)$. Additionally, since $b \ge 2$, $d_3 \le g-1$, $n_3\leq 186$
and \eqref{eq:main}, the estimate for $a$ is trivial.
This concludes the proof of the first part of Theorem \ref{T_Main}.

\section{Counting the number of triples}\label{S:count}

We are left with the proof of the last statement of our main Theorem \ref{T_Main}. The main purpose of this section
is to prove Theorem \ref{thm:count} below.
Let $\tilde{\mathcal R_g}$ be the set of repdigits together with the integers of digit length $1$ in base $g$. Denote
by $\tilde{\mathcal D_g}$ the set of triples $(a,b,c)\in {\mathbb N}^3$ such that $1\leq c<b<a$ and $ab+1,ac+1$ and $bc+1$ are elements
of $\tilde{\mathcal R_g}$. We prove the following theorem:

\begin{theorem}
\label{thm:count}
We have
$$
\#\tilde {\mathcal D_g}\asymp g^{3/2}\qquad (g\to \infty),
$$
and
$$
\#{{\mathcal D_g}}\ll g^{1+o(1)}\qquad (g\to \infty).
$$
\end{theorem}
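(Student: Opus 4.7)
The theorem makes two separate claims. The lower bound $\#\tilde{\mathcal D_g}\gg g^{3/2}$ is purely constructive, while the two upper bounds follow from a case analysis driven by how many of the three values $ab+1$, $ac+1$, $bc+1$ are multi-digit repdigits (as opposed to single-digit integers of $\tilde{\mathcal R_g}$). The key technical inputs are the bound $n_3\le 186$ from Proposition \ref{Prop:bound_n3} and the classical divisor estimate $\tau(n)\le n^{o(1)}$.

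For the lower bound, I exhibit the family explicitly: whenever $1\le c<b<a$ and $ab\le g-2$, automatically $ac,bc\le g-2$, so each of $ab+1,ac+1,bc+1$ lies in $\{2,\ldots,g-1\}\subset\tilde{\mathcal R_g}$. An elementary count gives
\[
\#\tilde{\mathcal D_g}\ge\sum_{b=2}^{\lfloor\sqrt{g-2}\rfloor}(b-1)\left(\left\lfloor\frac{g-2}{b}\right\rfloor-b\right)\gg g^{3/2}.
\]

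For the matching upper bound, since $a>b>c$ forces $ab+1>ac+1>bc+1$, triples in $\tilde{\mathcal D_g}$ split into exactly four configurations according to which of the three quantities are multi-digit. The configuration where all three are single-digit matches the construction above and contributes $O(g^{3/2})$. If only $ab+1$ is multi-digit, then $a\le g-2$ (since $ac+1\le g-1$ and $c\ge 1$), and $ab+1$ is a $2$-digit repdigit of the form $d_3(g+1)$; summing the factor count $\tau(d_3(g+1)-1)/2$ weighted by the at most $\min(b-1,(g-2)/a)$ choices of $c$ yields $g^{1+o(1)}$ via the divisor bound. The analogous analysis handles the case where exactly $ab+1$ and $ac+1$ are multi-digit (where $bc\le g-2$ restricts $b,c$ to be small). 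The fourth configuration is $\mathcal D_g$ itself, controlled by the next step.

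To establish $\#\mathcal D_g\ll g^{1+o(1)}$, I invoke Proposition \ref{Prop:bound_n3} to place $M_3:=ab+1$ in a set of cardinality at most $186(g-1)$. For each such $M_3\le g^{186}$, the factorizations $M_3-1=ab$ with $a>b\ge 2$ number at most $\tau(M_3-1)/2=g^{o(1)}$ as $g\to\infty$, so summing yields $g^{1+o(1)}$ admissible pairs $(a,b)$. Each such pair must then be shown to yield at most $g^{o(1)}$ valid $c$: using the identity $(M_3-1)(M_1-1)=b^2(M_2-1)$ coming from $(ab)(bc)=b^2(ac)$, fixing $M_1=bc+1$ determines $M_2=ac+1$, so one counts repdigits $M_1\in\mathcal R_g$ with $b\mid M_1-1$ and $M_1\le b^2+1$; for each of the at most $2\log_g b+1$ digit-lengths $n_1$, the residue of $d_1$ modulo $b$ is essentially fixed, giving at most $(g-1)/b+1$ candidates. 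The main obstacle is making this count uniform in $b$: when $b$ is small relative to $g$, the divisibility $b\mid M_1-1$ is weak and the per-pair count grows, so one must combine it with the complementary scarcity of admissible pairs $(a,b)$ in that range to absorb the loss and secure the final $g^{1+o(1)}$ bound.
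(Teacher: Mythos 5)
Your lower bound construction and your treatment of the all-single-digit configuration match the paper's argument and are fine. The genuine gap is in the last step, the bound $\#\mathcal D_g\ll g^{1+o(1)}$, exactly at the point you yourself flag as ``the main obstacle''. You count the choices of $c$ for a fixed pair $(a,b)$ through the congruence $b\mid M_1-1$ coming from $bc+1\in\mathcal R_g$, which gives about $1+(g-1)/b$ candidates per digit length $n_1$. But $b$ can be as small as roughly $\sqrt g$ for every value of $n_3$ up to $186$ (e.g.\ $b\asymp\sqrt g$ and $a\asymp g^{n_3-1/2}$), and the ``complementary scarcity'' you appeal to is not available: the $185(g-1)$ admissible values of $M_3=ab+1$ can each contribute $g^{o(1)}$ divisors $b$ near $\sqrt g$, so up to $g^{1+o(1)}$ pairs, each carrying on the order of $\sqrt g$ candidate values of $c$. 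As written your scheme only yields $g^{3/2+o(1)}$, not $g^{1+o(1)}$, and no mechanism is supplied to absorb the loss.

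The paper closes precisely this gap by running the congruence modulo $a$ rather than modulo $b$: from $ac+1=d_2(g^{n_2}-1)/(g-1)$ one places $d_2$ in a fixed residue class mod $a$, and $a\ge\sqrt{ab}\ge g^{(n_3-1)/2}$. For $n_3\ge 3$ this forces $a>g>d_2'-d_2$, so $d_2$ (hence $c$) is unique for each $n_2$, giving $g^{1+o(1)}$ outright; for $n_3=2$ one has $a\ge\sqrt{d_3(g+1)-1}$, the number of admissible $d_2$ is $\ll\sqrt{g/d_3}$, and the weighted sum $\sum_{d_3\le g-1}\tau\bigl(d_3(g+1)-1\bigr)\sqrt{g/d_3}\ll g^{1+o(1)}$ finishes the count. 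If you switch your congruence from $b$ to $a$ (equivalently, constrain $c$ through $M_2=ac+1$ rather than $M_1=bc+1$ --- the very constraint your identity $(M_3-1)(M_1-1)=b^2(M_2-1)$ discards) and split on $n_3=2$ versus $n_3\ge 3$, the argument goes through. A secondary point: for the configuration where only $bc+1$ is single-digit you need a bound that is $o(g^{3/2})$, not merely $O(g^{3/2})$, in order to conclude $\#\tilde{\mathcal D_g}\asymp g^{3/2}$, and the same mod-$a$ device is what delivers it; your sketch of the ``analogous analysis'' does not.
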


Since $g$ is fixed throughout this section, we will omit the index of $\mathcal D_g$ and $\tilde{\mathcal D}_g$ and write only
$\mathcal D$ and $\tilde{\mathcal D}$, respectively. During the
course of the proof of Theorem \ref{thm:count}, we consider several subsets of $\tilde{\mathcal D}$ which will be denoted by
$\mathcal D_1,\dots ,\mathcal D_4$. We want to emphasize here that in the following a subscript of $\mathcal D$ does not refer to the
base $g$, but instead to a certain subset of $\tilde{\mathcal D}$.

\begin{proof}
Clearly, $\tilde{\mathcal D}$ can also be identified with the set of all sextuples
$$
(d_1,d_2,d_3,n_1,n_2,n_3)\qquad {\text{\rm where}}\qquad 1\leq d_i \leq g-1\quad {\text{\rm for}}\quad i=1,2,3,
$$
such that there exist positive integers $c<b<a$ having the property that equations \eqref{eq:main} hold.
Under this identification and using Proposition~\ref{Prop:bound_n3}, for $g$ large enough we have $n_1\le n_2\le n_3\le 113$ and $1\leq d_i\leq g-1$
for $i=1,2,3$. So, trivially, $\#\tilde{\mathcal D}\ll g^3$.
Let us improve this trivial bound. Let ${\mathcal D}_1$ be the subset of $\tilde{\mathcal D}$ such that $n_3=1$,
and ${\mathcal D}_2=\tilde{\mathcal D}\backslash {\mathcal D}_1$.
We prove:
\begin{itemize}
\item[(i)] $\#{\mathcal D}_1 \asymp g^{3/2}$ as $g\to\infty$.
\item[(ii)] $\#{\mathcal D}_2\ll g^{1+o(1)}$ as $g\to\infty$.
\end{itemize}
The conclusion of Theorem  \ref{thm:count} follows from (i), (ii) and the fact that
$$
\#\tilde{\mathcal D}= \#{\mathcal D}_1+\#{\mathcal D}_2.
$$

First, let us deal with (i). For the lower bound, we choose $a>b>c$ all three in $\{1,2,\ldots,\lfloor {\sqrt{g-2}}\rfloor\}$. For each of these choices,
$$
ab+1\le \lfloor {\sqrt{g-2}}\rfloor^2+1\le g-1,
$$
so $ab+1=d_1\in [1,g-1]$ and similarly $ac+1=d_2,~bc+1=d_3$. Thus, $(a,b,c)$ is in ${\mathcal D}_1$, and we get
\begin{equation}
\label{eq:1}
\#{\mathcal D}_1\ge \binom{\lfloor {\sqrt{g-2}}\rfloor}{3}\gg g^{3/2}.
\end{equation}
For the upper bound, note that we have to count the integers $a>b>c$ satisfying \eqref{eq:main} with $n_1=n_2=n_3=1$. In particular,
we have to count the triples $(a,b,c)$ satisfying
\begin{equation}\label{eq:D1_count}
1\leq a \leq g-2\;\; \text{and}\;\; 1\leq c<b<\min\left\{a,\frac{g-1}{a}\right\}.
\end{equation}
For fixed $a$ there are $\ll\min\{a,g/a\}^2$ pairs $(b,c)$ satisfying \eqref{eq:D1_count}.
Therefore we obtain
$$
\# \mathcal D_1 \ll \int_1^g \min\{a,g/a\}^2 da= \int_1^{\sqrt g} a^2 da+\int_{\sqrt{g}}^{g} \left(\frac ga\right)^2 da \ll g^{3/2},
$$
which is the desired upper bound.

For (ii), let ${\mathcal D}_3$ be the subset of ${\mathcal D}_2$ such that $n_3\ge 3$.
Due to Proposition~\ref{Prop:bound_n3}, for $g$ large enough we may assume that $n_3\le 113$ and $d_3\le g-1$. We look at
\begin{equation}
\label{eq:3}
ab=d_3\left(\frac{g^{n_3}-1}{g-1}\right)-1.
\end{equation}
Clearly, since $n_3\ge 3$, we have $a^2>ab\ge (g^2+g+1)-1>g^2$, so $a>g$.
Since $d_3$ and $g$ are fixed and $n_3\le 113$, the number of ways of choosing $(a,b)$ such that $a>b$ and \eqref{eq:3} holds
is
$$
\tau\left(\frac{d_3(g^{n_3}-1)}{g-1}-1\right)\ll g^{o(1)}\qquad {\text{\rm as}}\qquad g\to\infty,
$$
where $\tau(n)$ is the number of divisors of $n$. The asymptotic bound on the right side follows from a well--known upper bound
for the divisor function (e.g. see \cite[Theorem 2.11]{MV:book} or \cite[Chapter 7.4]{DL:book})
It remains to find out in how many ways we can choose $c$. Well, let us also fix $n_2\le n_3$. Then $d_2\in \{1,\dots,g-1\}$ is such that
$$
d_2\left(\frac{g^{n_2}-1}{g-1}\right)\equiv 1\pmod a.
$$
This puts $d_2$ into a fixed arithmetic progression $\alpha_{n_2}$ modulo $a$, where $\alpha_{n_2}$
is the inverse of $(g^{n_2}-1)/(g-1)$ modulo $a$. We show that this progression contains at most one
value for $d_2$. Assuming this is not the case, let $d_2$ and $d_2'$ be both congruent to $\alpha_{n_2}$ and in the intervall $[1,g-1]$.
Assume that $d_2<d_2'$, then $a\mid d_2'-d_2$, so
$$
g<a\le d_2'-d_2\le g-2,
$$
which is false. This shows that indeed once $d_3,~n_3$ and $a$ (hence also $b$) are determined, then any choice of $n_2\le n_3$
determines $d_2$ (hence, $c$) uniquely. Thus,
\begin{equation}
\label{eq:D3}
\#{\mathcal D_3} \le \sum_{d_3=1}^{g-1} \sum_{n_3=3}^{113} \sum_{n_2=1}^{n_3} \tau_2\left(\frac{d_3(g^{n_3}-1)}{g-1}-1\right)\ll
g^{1+o(1)}\qquad (g\to\infty).
\end{equation}

It remains to find an upper bound for the cardinality of ${\mathcal D}_4:={\mathcal D}_2\backslash {\mathcal D}_3$.
These triples are the ones that have $n_3=2$. We fix $d_3$ and write
\begin{equation}
\label{eq:ab}
ab=d_3(g+1)-1.
\end{equation}
There are at most $\tau_2(d_3(g+1)-1)=g^{o(1)}$ possibilities for $a>b$ satisfying the above relation \eqref{eq:ab}
as $g\to \infty$. It remains to determine the number of choices for $c$. Let us also fix $n_2\le n_3=2$.
Then determining $c$ is equivalent to determining the number of choices for $d_2$ such that
\begin{equation}
\label{eq:4}
d_2\left(\frac{g^{n_2}-1}{g-1}\right)\equiv 1\pmod a,\qquad 1\leq d_2 \leq g-1.
\end{equation}
Congruence \eqref{eq:4} puts $d_2$ in a certain fixed arithmetic progression modulo $a$ and the number of
such numbers $1\leq d_2 \leq g-1$ is at most
$$
1+\left\lfloor \frac{g-1}{a}\right\rfloor.
$$
We assume that $a\le g-1$, otherwise there is at most one choice for $d_2$, and the counting function of such examples is at most $g^{1+o(1)}$ by the argument for $\#{\mathcal D}_3$. Then
the number of choices for $c$ is at most
$$
1+\left \lfloor  \frac{g-1}{a}\right\rfloor\le 1+\frac{g-1}{a}<\frac{2g}{a}\le \frac{2g}{{\sqrt{d_3(g+1)-1}}}\ll \frac{\sqrt{g}}{\sqrt{d_3}}.
$$
This shows that
\begin{eqnarray*}
\#{\mathcal D_4} & \ll & \sum_{n_2\le 2} \sum_{d_3=1}^{g-1} \tau_2(d_3(g+1)-1) \frac{\sqrt{g}}{\sqrt{d_3}}\\
& \ll & g^{1/2+o(1)} \sum_{1\le d_3\le g-1} \frac{1}{\sqrt{d_3}}\ll g^{1/2+o(1)} \int_1^{g-1} \frac{dt}{t^{1/2}}\\
& \ll & g^{1/2+o(1)}\left( 2t^{1/2} \Big|_{t=1}^{t=g-1}\right)
\ll g^{1+o(1)}\qquad (g\to\infty).
\end{eqnarray*}
Together with \eqref{eq:D3}, we get
$$
\#{\mathcal D}_2\le \#{\mathcal D}_3+\#{\mathcal D}_4\le g^{1+o(1)}\qquad (g\to\infty),
$$
which is (ii).
\end{proof}

\section{The case of small bases $g$}

We have computed for the bases $2\leq g\leq 200$ all triples $(a,b,c)\in \D_g$. In particular we found the following triples:

$$
\begin{tabular}{|c|c|c|c|}
  \hline
  $g$ & $a$ & $b$ & $c$ \\
  \hline
  23 & 65 & 17 & 7 \\
  \hline
  42 & 136 & 93 & 6 \\
  \hline
  104 & 292 & 187 & 32 \\
  \hline
  171 & 5607 & 619 & 5 \\
  \hline
  190 & 439 & 248 & 67 \\
  \hline
\end{tabular}
$$

In our computations, we considered all values of $2 < g \leq 200$ one by one and we split our work
depending on the size of $a$. If $g\leq 100$, we put $B:=1000$
and for $101\leq g \leq 200$, we put $B:=10000$.

For every $a<B$ we do as follows: for $2\leq b <a$ we check whether $ab+1$ is a repdigit number in base $g$.
If yes, we also check if we can find $c<b$ such that $ab+1,ac+1$ and $bc+1$ are all repdigit numbers in base $g$.

For $a\geq B$ we proceed as follows: We use equations \eqref{eq:main} and \eqref{eq:n3}.
For all integer values of
$2\leq n_2\leq 186$ and all integer values of $n_3$ between $n_2$ and the minimum of 186 and $2n_2$
and for all possible digits $d_2$ and $d_3$, we compute
$$
ab=d_3\frac{g^{n_3}-1}{g-1}-1, \quad\quad\quad ac=d_2\frac{g^{n_2}-1}{g-1}-1.
$$
Since $a\leq \gcd(ab,ac)$, the cases when $\gcd(ab,ac)<B$ are covered by the cases when $a<B$, so
we only have further work to do if
$$
\gcd\left(d_3\frac{g^{n_3}-1}{g-1}-1, d_2\frac{g^{n_2}-1}{g-1}-1\right)\geq B.
$$
In this case, for every integer $2\leq n_1\leq n_2$ and every digit $d_1$, we check whether
$$
\left(d_3\frac{g^{n_3}-1}{g-1}-1\right) \left(d_2\frac{g^{n_2}-1}{g-1}-1\right)\left(d_1\frac{g^{n_1}-1}{g-1}-1\right)
$$
is a square, and if yes, then we check whether the corresponding values of $a,b$ and $c$ are integers.
If yes, then we found a solution.

We implemented the above algorithm in Magma\cite{MAGMA} and the running time was less than 4 days on an Intel(R) Core(TM) 960  3.2GHz processor.

\subsection*{Acknowledgements}
The research was supported in part by the University of Debrecen (A.B.),
by the J\'anos Bolyai Scholarship of the Hungarian Academy of Sciences (A.B.).
and by grants K100339 (A.B.) and NK104208 (A.B.) of the Hungarian National Foundation for Scientific Research.
The research was also granted by the Austrian science found (FWF) under the project P 24801-N26.

\end{document}